\newtheorem{theorem}{Theorem}
\newtheorem{lemma}{Lemma}
\newtheorem{proposition}{Proposition}
\theoremstyle{definition}
\newtheorem{definition}{Definition}
\newcommand{\bigS}{\mathcal{S}}
\newcommand{\M}{\mathcal{M}}
\newcommand{\E}{\textbf{E}}
\DeclareMathOperator{\per}{Per}
\DeclareMathOperator{\sel}{Sel}
\title{A recursive construction of $t$-wise uniform permutations}
\author{Hilary Finucane\footnote{Weizmann Institute of Science, Israel. Email: hilary.finucane@weizmann.ac.il. Supported by an ERC grant.} \and Ron Peled\footnote{Tel Aviv University, Israel. E-mail: peledron@post.tau.ac.il. Supported by an ISF grant and an IRG grant.} \and Yariv Yaari\footnote{Weizmann Institute of Science, Israel.}}
\begin{document}
\maketitle

\begin{abstract}
We present a recursive construction of a $(2t+1)$-wise uniform set of permutations on $2n$ objects using a $(2t+1)-(2n,n,\cdot)$ combinatorial design, a $t$-wise uniform set of permutations on $n$ objects and a $(2t+1)$-wise uniform set of permutations on $n$ objects. Using the complete design in this procedure gives a $t$-wise uniform set of permutations on $n$ objects whose size is at most $t^{2n}$, the first non-trivial construction of an infinite family of $t$-wise uniform sets for $t \geq 4$. If a non-trivial design with suitable parameters is found, it will imply a corresponding improvement in the construction.
\end{abstract}

\noindent {\bf Keywords:} $t$-wise permutation, combinatorial design, recursive construction.

\section{Introduction}
A $t$-wise uniform set of permutations is a subset of the symmetric
group $S_n$ which has the same statistics on any $t$-tuple as $S_n$. In other words:

\begin{definition}
  A \emph{$t$-wise uniform set} (of permutations on $n$ objects) is a subset $T\subseteq S_n$ such that for any distinct $i_1,\ldots, i_t\in[n]$ and any distinct
  $j_1,\ldots, j_t\in[n]$, we have that the probability that $\sigma(i_m) = j_m$ for all $1 \leq m \leq t$ is the same whether $\sigma$ is chosen uniformly from $S_n$ or uniformly from $T$.
  \end{definition}
Equivalently, $T$ is a $t$-wise uniform set if
  \begin{equation}
  \label{t-wise_def}
    \frac{1}{|T|}|\{\sigma\in T\,: \, \sigma(i_m)=j_m\text{ for $1\leq m \leq t$}\}| = \frac{1}{n(n-1)\cdots(n-t+1)}
  \end{equation}
  for every distinct $i_1,\ldots, i_t\in[n]$ and every distinct
  $j_1, \ldots, j_t\in[n]$.

There are two \emph{trivial} constructions of $t$-wise uniform sets:
The symmetric group $S_n$ is a $t$-wise uniform set for any $t \leq
n$, and the alternating group $A_n$ is a $t$-wise uniform set for $t
\leq n-1$ (and $n>2$). In this paper we consider the problem of
constructing non-trivial $t$-wise uniform sets.

The problem of finding explicit constructions for $t$-wise uniform
sets was posed as an open problem by Kaplan, Naor, and Reingold in
\cite{KaplanNaorReingold05}. It was also shown there that
\emph{approximate} $t$-wise uniform sets of small size exist.
Approximate $t$-wise uniform sets were further explored in
\cite{AlonLovett12}, where Alon and Lovett showed that there exists
a perfect $t$-wise uniform \emph{distribution} over any approximate
$t$-wise uniform set, a useful result for derandomization.

Non-trivial explicit constructions of (non-approximate) $t$-wise uniform sets for
infinitely many $n$ are known only for $t=1,2,3$: the group of
cyclic shifts $x \mapsto x+a$ modulo $n$ is a $1$-wise uniform set,
the group of invertible affine transformations $x \mapsto ax+b$ over
a finite field $\mathbb{F}$ yields a $2$-wise uniform set, and the
group of M\"{o}bius transformations $x \mapsto (ax+b)/(cx+d)$ with
$ad-bc=1$ over the projective line $\mathbb{F} \cup \{\infty\}$
yields a $3$-wise uniform set. Moreover, it is known (see for
example \cite[Theorem 5.2]{Cameron95}) that for $n \ge 25$ and $t
\ge 4$ there are no \emph{subgroups} of $S_n$, other than $A_n$ and
$S_n$ itself, that form a $t$-wise uniform set; such subgroups are
called $t$-transitive subgroups of $S_n$. In contrast, it was shown
recently \cite{KuperbergLovettPeled11} that for all $n \geq 1$ and
$1 \leq t \leq n$, there exists a $t$-wise uniform set of
permutations on $n$ letters of size $n^{ct}$ for some universal
constant $c>0$. For small $t$ and large $n$, this result is close to
the simple lower bound of $n(n-1)\cdots(n-t+1)$ which is implied by
\eqref{t-wise_def}. It is important to emphasize, however, that the
proof in \cite{KuperbergLovettPeled11} is purely existential and
provides no hint as to the construction of such small $t$-wise
uniform sets. Our work gives the first non-trivial explicit
construction of an infinite family of $t$-wise uniform sets for $t
\geq 4$.

A natural approach to constructing $t$-wise uniform sets is the divide-and-conquer method. To choose a permutation $\sigma$ on $2n$ letters, it suffices to do the following.
\begin{description}
\item[Step 1.] Choose the set $S\subseteq [2n]$ of $n$ elements that will be mapped by $\sigma$ to $1, \ldots, n$.
\item[Step 2.] Choose two permutations $\tau_1$ and $\tau_2$ on $n$ letters each to determine the behavior of $\sigma$ on $S$ and $S^c$.
\end{description}
If both steps are done independently and uniformly at random, then
the resulting permutation $\sigma$ is uniformly random. Moreover, if
Step 1 is done uniformly at random and $\tau_1$ and $\tau_2$ are
sampled independently (of each other and of $S$) and uniformly
from two $t$-wise uniform sets, then the resulting family of
permutations forms a $t$-wise uniform set. Indeed, in this case if
$0\le m\le t$, $1 \leq j_1, \ldots , j_m \leq n$ and $n+1 \leq
j_{m+1}, \ldots, j_t \leq 2n$ are distinct indices and $1 \leq i_1,
\ldots , i_t \leq 2n$ are distinct indices, then
\begin{align*}
\Pr(\sigma(i_1) = j_1, \ldots , \sigma(i_t) = j_t) &= \Pr(i_1, \ldots i_m \in S, i_{m+1}, \ldots i_t \notin S)\cdot\\
& \cdot \Pr(\sigma(i_1) = j_1, \ldots, \sigma(i_m) = j_m |i_1, \ldots i_m \in S, i_{m+1}, \ldots i_t \notin S)\cdot\\
& \cdot \Pr (\sigma(i_{m+1}) = j_{m+1}, \ldots , \sigma(i_t) = j_t |
i_1, \ldots i_m \in S, i_{m+1}, \ldots i_t \notin S)
\end{align*}
and each term on the right hand side takes the same value whether
$S, \tau_1$, and $\tau_2$ are chosen independently and uniformly at
random, or $S$ is chosen uniformly at random and $\tau_1$ and
$\tau_2$ are chosen independently and uniformly from $t$-wise
uniform sets.

This observation gives us a naive approach to constructing $t$-wise uniform sets recursively. Letting $\per(n,t)$ denote the minimal size of a $t$-wise uniform set of permutations on $n$ elements, we obtain the recursion
\begin{equation}
\label{initial_recursion} \per(2n,t) \leq \binom{2n}{n}\per(n,t)^2 .
\end{equation}
We can use this recursion, together with the initial conditions
$\per(n,t) = n!$ for $t\ge n$, to construct $t$-wise uniform sets;
however, this recursion does not yield non-trivial constructions.

The first main contribution of this work is to propose an improved
divide-and-conquer scheme for creating $t$-wise permutations.
Specifically, adapting an idea proposed in
\cite{BenjaminiGurelGurevichPeled07}, we observe that to construct a
$(2t+1)$-wise uniform set of permutations on $2n$ elements, it
suffices to use a $(2t+1)$-wise uniform set on $n$ elements and a
$t$-wise uniform set on $n$ elements in Step 2 above, instead of two
$(2t+1)$-wise uniform sets (see Figure~\ref{figure}). This yields
the recursion:

\begin{equation}
\label{equation.recursion} \per(2n, 2t+1) \leq \binom{2n}{n} \per(n,
2t+1) \per(n,t),
\end{equation}
where we define $\per(n,2t+1) = \per(n,n) = n!$ when $2t+1 > n$.
Unlike the naive recursion \eqref{initial_recursion}, this recursion
yields a construction of a family of non-trivial $t$-wise uniform
sets.

\begin{theorem}\label{theorem.main} $\per(n,t)\le t^{2n}$ when $n$ and $t$ have the form $n=2^m$ and
$t=2^\ell-1$ for integers $m\ge 1$ and $\ell\ge 2$.
\end{theorem}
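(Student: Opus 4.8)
The plan is to run an induction on $m$ that handles all admissible $\ell\ge 2$ at once, feeding the odd parameter $t=2^\ell-1$ into the recursion \eqref{equation.recursion}. The point is that $t$ is odd, so writing $t=2s+1$ with $s:=2^{\ell-1}-1$ and applying the recursion to $2^m=2\cdot 2^{m-1}$ gives
\[
\per\!\left(2^m,\,2^\ell-1\right)\;\le\;\binom{2^m}{2^{m-1}}\,\per\!\left(2^{m-1},\,2^\ell-1\right)\,\per\!\left(2^{m-1},\,2^{\ell-1}-1\right),
\]
and one compares the right-hand side with the target $(2^\ell-1)^{2\cdot 2^m}=\bigl((2^\ell-1)^{2\cdot 2^{m-1}}\bigr)^2$, bounding the two smaller $\per$-values by the inductive hypothesis and the central binomial coefficient crudely. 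The base case $m=1$ is immediate: then $2^\ell-1\ge 3>2$, so $\per(2,2^\ell-1)=2!=2\le(2^\ell-1)^4$.

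For the inductive step I would first dispose of the range $\ell\ge m+1$, which is exactly the range $2t+1>n$ in which the convention $\per(n,2t+1)=n!$ applies: here $t=2^\ell-1\ge 2^{m+1}-1\ge 2^m=n$, so $\per(n,t)=n!\le n^n\le t^{2n}$, with no recursion needed. It remains to treat $2\le\ell\le m$ (so $m\ge 2$). In the displayed recursion $\per(2^{m-1},2^\ell-1)\le(2^\ell-1)^{2\cdot 2^{m-1}}$ by the hypothesis at $m-1$ with the same $\ell$. For the last factor there are two cases. If $\ell\ge 3$, then $2^{\ell-1}-1=2^{\ell'}-1$ with $\ell'=\ell-1\ge 2$, so the hypothesis at $m-1$ gives $\per(2^{m-1},2^{\ell-1}-1)\le(2^{\ell-1}-1)^{2\cdot 2^{m-1}}$; substituting and cancelling, what remains is
\[
\binom{2^m}{2^{m-1}}\;\le\;\left(\frac{2^\ell-1}{2^{\ell-1}-1}\right)^{2^m},
\]
which holds because $\binom{2^m}{2^{m-1}}<2^{2^m}$ while $\tfrac{2^\ell-1}{2^{\ell-1}-1}=2+\tfrac{1}{2^{\ell-1}-1}>2$. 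If $\ell=2$ the last factor is $\per(2^{m-1},1)=2^{m-1}$ (the minimal size of a $1$-wise uniform set, realised by the cyclic shifts), and what is left is $\binom{2^m}{2^{m-1}}\cdot 2^{m-1}\le 9^{2^{m-1}}$, which follows from $\binom{2^m}{2^{m-1}}<4^{2^{m-1}}$ and $2^{m-1}\le 2^{2^{m-1}}$ together with $4\cdot 2<9$.

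The arithmetic above is routine; the only thing that needs care is the bookkeeping at the two boundaries of the induction. One is the range $2t+1>n$, where $\per$ degenerates to a factorial and must be bounded directly rather than recursively. The other is the level $\ell=2$, where $s=2^{\ell-1}-1=1$ and the naive hypothesis ``$\per(n,1)\le 1^{2n}$'' is false and has to be replaced by the exact value $\per(n,1)=n$ — which is why the construction bottoms out on the cyclic shifts at the $1$-wise level (and on the trivial sets $S_k$ whenever $2t+1>k$). Once these are handled, the content of the estimate is simply that using a $t$-wise set (parameter $\approx t/2$) in place of a second $(2t+1)$-wise set at Step~2 frees up, relative to the target, a factor $\bigl(\tfrac{2^\ell-1}{2^{\ell-1}-1}\bigr)^{2n}>2^{2n}$, which comfortably swallows the $\binom{2n}{n}<2^{2n}$ loss incurred at Step~1; that slack is ultimately what makes the bound $t^{2n}$ go through.
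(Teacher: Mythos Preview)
Your argument is correct and follows essentially the same route as the paper's own proof: induction on $m$ over all $\ell\ge 2$, with the degenerate range $t\ge n$ handled directly by $n!\le t^{2n}$, the case $\ell=2$ handled via $\per(2^{m-1},1)=2^{m-1}$ and the crude bound $\binom{2^m}{2^{m-1}}<2^{2^m}$, and the case $\ell\ge 3$ closed by $2(2^{\ell-1}-1)<2^\ell-1$. The only difference is organizational --- the paper first runs the $\ell=2$ induction to completion and then the general one, whereas you fold both into a single induction on $m$ with a case split on $\ell$ --- but the estimates are identical.
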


While $t^{2n}$ is much smaller than the trivial upper bound $n!$, it
is still much larger than the existence result of
\cite{KuperbergLovettPeled11}. The second main contribution of this
work is to suggest a potential extension that could lead to a
smaller construction: Denoting by $\binom{[n]}{k}$ the set of
subsets of $[n]$ of size exactly $k$, we suggest to replace the
uniformly chosen set of $n$ elements from Step 1 above with a set of
$n$ elements that is $t$-wise uniform, in the following sense.
\begin{definition}
A {\em $(2n,t)$-selection} is a subset $\bigS \subseteq
\binom{[2n]}{n}$ such that for all $I=\{i_1, \ldots, i_t\} \subseteq
[2n]$ and all $J\subseteq I$, the probability that $J\subseteq S$
and $I\setminus J\subseteq S^c$ is the same whether $S$ is chosen
uniformly from $\binom{[2n]}{n}$ or uniformly from $\bigS$.
\end{definition}

A $(2n,t)$-selection is equivalent to a $t-(2n, n, \cdot)$
combinatorial design (see Section~\ref{sec.design}). To obtain a
$t$-wise uniform set, we can choose $S$ uniformly from a
$(2n,t)$-selection in Step~1 above, rather than uniformly from
$\binom{[2n]}{n}$. Letting $\sel(2n,t)$ denote the minimal size of a
$(2n,t)$-selection on $2n$ elements, this allows us to replace the
naive recursion \eqref{initial_recursion} with the following
recursion:
\begin{equation}
\label{initial_recursion2}
\per(2n,t) \leq \sel(2n,t)\per(n,t)^2 ,
\end{equation}
and the more powerful recursion \eqref{equation.recursion} with:
\begin{equation}
\label{equation.recursion2}
\per(2n, 2t+1) \leq \sel(2n,2t+1) \per(n, 2t+1) \per(n,t) .
\end{equation}

These recursions imply that a non-trivial construction of a
$(2n,t)$-selection with the appropriate parameters will result in a
non-trivial construction of a $t$-wise uniform set. For example, it
is shown in \cite{RayChaudhuriWilson75} that a $(2n,t)$ selection
(regarded as a $t-(2n,n,\cdot)$ design) must be of size at least
$\binom{2n}{t/2}$ if $t$ is even and of size at least
$2\binom{2n-1}{(t-1)/2}$ if $t$ is odd. If there were an explicit
construction of a $(2n,t)$ selection of size $n^{ct}$ for some $c>0$
(the existence of such a selection is proven in
\cite{KuperbergLovettPeled11} but no explicit construction is
known), then recursion~\eqref{initial_recursion2} would lead to an
explicit construction of a $t$-wise uniform set on $n$ elements of
size at most $(t+1)^{c'n}$ for some $c'>0$ and
recursion~\eqref{equation.recursion2} would lead to a $t$-wise
uniform set of size at most $2^{c_t(\log{n})^{\log_2{(t+1)}}}$ for
some $c_t>0$ depending only on $t$. Thus our improved recursion
allows us more efficiently to reduce the problem of finding $t$-wise
uniform sets to the problem of finding $(2n,t)$-selections. If a
$(2n,t)$-selection is allowed to be a multiset, then a reduction in
the reverse direction holds as well: a $(2n,t)$-selection $\bigS$
can be obtained from a $t$-wise uniform set $T$ by taking the family
of sets of elements mapped to $1, \ldots , n$ by the elements of
$T$.

Unfortunately, we have been unable to come up with non-trivial
constructions of $(2n,t)$-selections with the appropriate
parameters, or to find such constructions in the literature on
combinatorial designs \cite{ColbournDinitz07, Kreher93}. Some more
suggestions for extending our approach are described in
Section~\ref{sec.future_work}.

\section{The improved recursion}
\label{sec.recursion}

In this section we derive the recursion~\eqref{equation.recursion2}.
Recursion~\eqref{equation.recursion} follows immediately by using
the complete selection.  For convenience, we will call a permutation
chosen uniformly at random from a $t$-wise uniform set a {\em
$t$-wise uniform permutation}.

%More formally,
Let $A$ and $B$ be $t$- and $(2t+1)$-wise uniform sets on $n$
elements, respectively, and let $\bigS$ be a $(2n,2t+1)$-selection.
For each $S\in\bigS$ let $f=f_S$ and $g=g_S$ denote bijections from
$S$ and $S^c$, respectively, to $[n]$. For each $\sigma \in A$,
$\tau \in B$, and $S \in \bigS$ define a permutation
$\mu_{S,\sigma,\tau}$ on $2n$ elements as follows:
\begin{equation}\label{eq:mu_S_sigma_tau_def}
\mu_{S,\sigma,\tau}(x) = \begin{cases}  (\tau \circ \sigma) (f(x)) & x \in S \\
                                        \tau(g(x))+n & x \in S^c
\end{cases}.
\end{equation}
The permutation $\mu_{S,\sigma,\tau}$ sends the elements of $S$ to
$\{1, \ldots , n\}$ and the elements not in $S$ to $\{n+1, \ldots,
2n\}$. The behavior of $\mu_{S,\sigma,\tau}$ on $S$ is determined by
$\tau \circ \sigma$ and the behavior on $S^c$ is determined by
$\tau$ (see Figure~\ref{figure}).

%Let $S$ be a subset of $[2n]$ of size $n$ drawn uniformly at random
%from a $(2n, 2t+1)$-selection $\bigS$, and let $\sigma$ and $\tau$
%be $t$- and $(2t+1)$-wise uniform permutations on $[n]$,
%respectively, chosen independently of each other and of $S$. We
%combine these three objects into a permutation $\mu$ on $[2n]$ that
%sends the elements of $S$ to $\{1, \ldots , n\}$ and the elements
%not in $S$ to $\{n+1, \ldots, 2n\}$, and whose behavior on $S$ is
%determined by $\tau \circ \sigma$ and whose behavior on $S^c$ is
%determined by $\tau$ (see Figure~\ref{figure}).

\begin{figure}[h!]
\begin{center}
\includegraphics[width=0.5\textwidth]{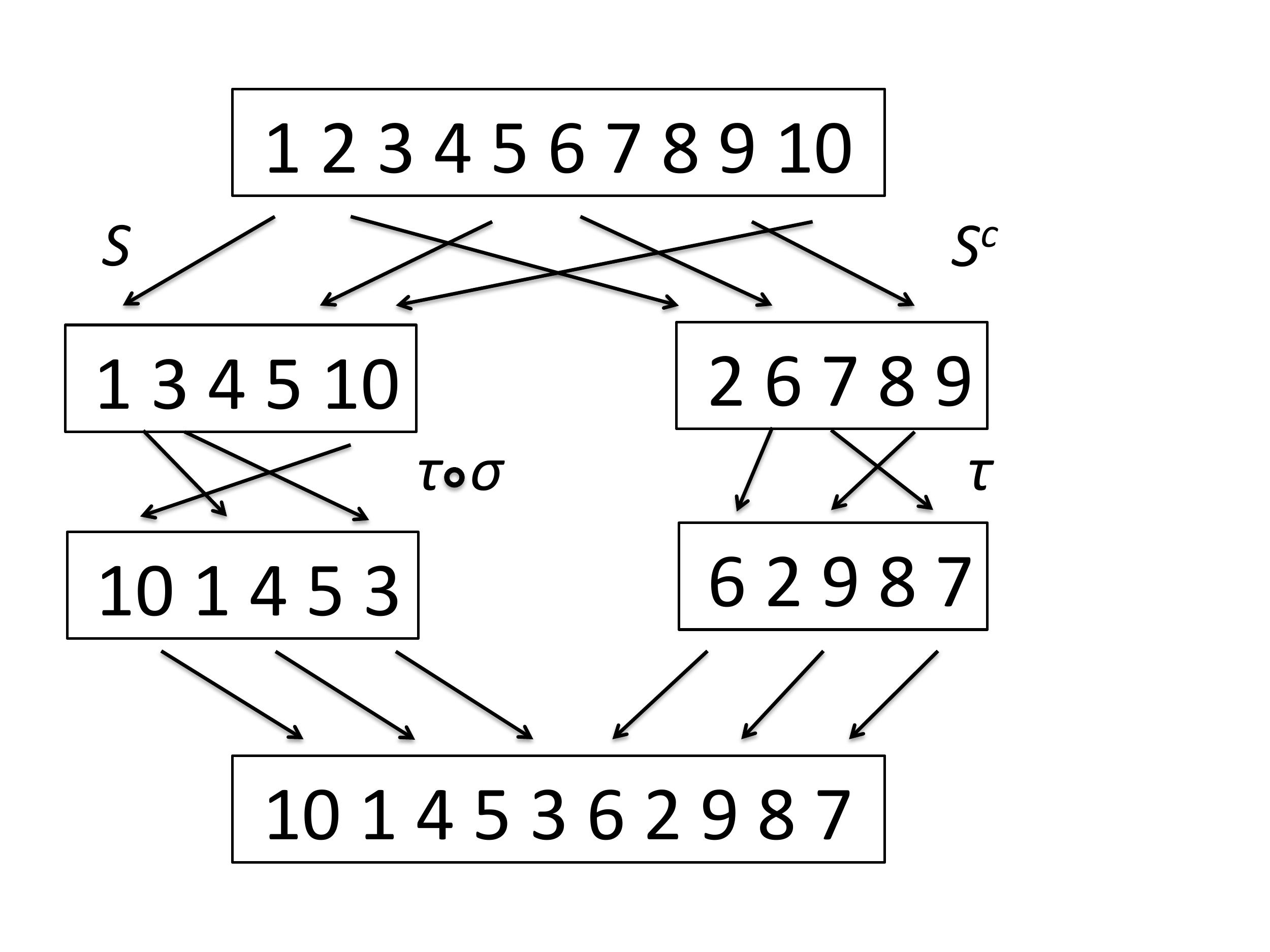}
\caption{\label{figure} A $(2t+1)$-wise permutation $\mu$ on $2n$
elements is constructed from a set $S$ chosen from a
$(2n,2t+1)$-selection, and permutations $\sigma$ and $\tau$ drawn
from $t$- and $(2t+1)$-wise uniform sets of permutations on $n$
elements, respectively. The behavior of $\mu$ on $S$ is determined
by $\tau \circ \sigma$, and the behavior of $\mu$ on $S^c$ is
determined by $\tau.$ Note that in this diagram, we are showing which number gets sent to which position; for example, $\mu(10) = 1$ and $\mu(1) = 2$.}
\end{center}
\end{figure}

\begin{proposition}
\label{prop.main} $\M = \{\mu_{S,\sigma,\tau} : S \in \bigS, \sigma
\in A, \tau \in B\}$ is a $(2t+1)$-wise uniform set.
\end{proposition}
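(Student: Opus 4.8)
The plan is to fix distinct target indices and distinct source indices and compute the probability that $\mu_{S,\sigma,\tau}$ realizes the given assignment when $S,\sigma,\tau$ are chosen independently and uniformly from $\bigS$, $A$, $B$; we want to show this equals $\frac{1}{2n(2n-1)\cdots(2n-2t)}$, matching a uniform permutation in $S_{2n}$. By relabeling we may assume the targets are $j_1,\dots,j_m \in [n]$ and $j_{m+1},\dots,j_{2t+1}\in\{n+1,\dots,2n\}$ for some $0\le m\le 2t+1$, and the sources are distinct indices $i_1,\dots,i_{2t+1}\in[2n]$. Since $\mu_{S,\sigma,\tau}$ sends $S$ into $[n]$ and $S^c$ into $\{n+1,\dots,2n\}$, the event $\{\mu(i_\ell)=j_\ell\ \forall \ell\}$ forces $i_1,\dots,i_m\in S$ and $i_{m+1},\dots,i_{2t+1}\in S^c$. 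The first step is to condition on $S$ and split: on the event that $S$ has the right membership pattern for the $i_\ell$'s, the behavior on $S^c$ is governed by $\tau$ alone (namely $\tau(g(i_\ell))=j_\ell-n$ for $\ell>m$), and the behavior on $S$ is governed by $\tau\circ\sigma$ (namely $(\tau\circ\sigma)(f(i_\ell))=j_\ell$ for $\ell\le m$).

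Next I would carry out the two conditional probability computations. Condition additionally on $S$ and on $\tau$. The $2t+1-m$ constraints on $S^c$ read $\sigma$-free: they say $\tau$ maps the distinct elements $g(i_{m+1}),\dots,g(i_{2t+1})\in[n]$ to the distinct elements $j_{m+1}-n,\dots,j_{2t+1}-n\in[n]$. Since $2t+1-m \le 2t+1$ and $B$ is $(2t+1)$-wise uniform, the probability of this over $\tau\in B$ is exactly $\frac{1}{n(n-1)\cdots(n-(2t-m)+1)}$, i.e.\ $\frac{(n-2t+m-1)!}{n!}$. Now fix such a good $\tau$. The remaining $m$ constraints on $S$ say $\sigma$ maps the distinct elements $f(i_1),\dots,f(i_m)\in[n]$ to the distinct elements $\tau^{-1}(j_1),\dots,\tau^{-1}(j_m)\in[n]$ (distinct because $\tau$ is a bijection and the $j_\ell$ are distinct). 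Since $m\le t$ — this is exactly where the asymmetry $2t+1 = 2\cdot t + 1$ and the fact that we only have $t$-wise uniformity available for $A$ bites, and it works because among $2t+1$ indices at most $t$ can land in $[n]$ while at least $t+1$ land above $n$... wait, no: $m$ can be as large as $2t+1$. Let me restate: we must also handle $m > t$.

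So the genuinely careful step is the case analysis on $m$. When $m\le t$, the argument above works directly: $\Pr_\sigma = \frac{1}{n(n-1)\cdots(n-m+1)}$ by $t$-wise uniformity of $A$. When $m > t$, note $2t+1-m < t+1$, i.e.\ $2t+1-m\le t$, so instead we should have set things up symmetrically — but $S^c$'s behavior uses only $\tau$, not $\sigma\circ\tau$, so we cannot simply swap roles. The resolution: when $m>t$ we have $2t+1-m\le t$, so $n-(2t+1-m)\ge n-t$; and the $m$ constraints on $S$ involve $\sigma$ composed with the already-fixed bijection $\tau$, so after fixing $\tau$ we need $\sigma$ to send $m$ specified points to $m$ specified points, and here $m$ could exceed $t$. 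To handle this, observe that once $S$ is fixed and we are asking for $\mu$'s values at all $i_\ell$ with $\ell\le m$, this is equivalent to asking that $\tau\circ\sigma$ agree with a fixed partial injection on $m$ points of $[n]$; but since $2t+1\le n$ is \emph{not} assumed in general (indeed the recursion sets $\per(n,2t+1)=n!$ when $2t+1>n$), in the regime of the theorem we may assume $m\le 2t+1\le n$. We then split the $m$ points: the point is that $\tau\circ\sigma$ with $\sigma\in A$ ($t$-wise uniform) and $\tau\in B$ ($(2t+1)$-wise uniform) is itself $(2t+1)$-wise uniform on those of the coordinates whose images are not otherwise constrained — more precisely, one shows directly that for fixed $S$, the pair of conditional distributions of $(\mu|_S,\mu|_{S^c})$ matches the uniform conditional distributions, and this follows by first using $(2t+1)$-wise uniformity of $B$ to handle \emph{all} $2t+1$ coordinates through $\tau$ (the $S$-coordinates via $\tau$ evaluated at the points $\sigma(f(i_\ell))$, the $S^c$-coordinates via $\tau$ at $g(i_\ell)$), conditioning first on $\sigma$: given $\sigma$, the $2t+1$ points $\sigma(f(i_1)),\dots,\sigma(f(i_m)),g(i_{m+1}),\dots,g(i_{2t+1})$ are distinct elements of $[n]$ (distinctness of the first $m$ uses injectivity of $\sigma$ and $f$; the last $2t+1-m$ are distinct by injectivity of $g$; and the two groups land in disjoint... no — both land in $[n]$, so I must also rule out collisions between $\sigma(f(i_a))$ and $g(i_b)$). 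Here is the clean fix: $f$ maps $S\to[n]$ and $g$ maps $S^c\to[n]$, these are different bijections, so $\sigma(f(i_a))$ and $g(i_b)$ need not be distinct.

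Therefore the correct organization, which I would present, is: condition on $S$ with the right pattern (probability $\frac{(2n-2t-1)!\, n!\, n!}{(2n)!\,(n-m)!\,(n-(2t+1-m))!}$ of $S$ having $i_1,\dots,i_m\in S$, $i_{m+1},\dots,i_{2t+1}\in S^c$, computed via the $(2n,2t+1)$-selection property of $\bigS$); then on the $S$-part use that $\tau\circ\sigma$ restricted to queries with $\le t$ source points and $(2t+1)$-wise uniform $\tau$ yields the uniform answer — specifically, conditioning on $\sigma$, the $m$ images $\tau(\sigma(f(i_1))),\dots,\tau(\sigma(f(i_m)))$ are required to equal $j_1,\dots,j_m$; conditioning on $\tau$, the $2t+1-m$ images $\tau(g(i_{m+1})),\dots$ equal $j_{m+1}-n,\dots$. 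Since for \emph{any} fixed $\sigma\in A$ the full list of $2t+1$ query points $\sigma(f(i_1)),\dots,\sigma(f(i_m)),g(i_{m+1}),\dots,g(i_{2t+1})$ consists of distinct elements of $[n]$ — and I will verify this distinctness is exactly the content of the construction, using that $S$ and $S^c$ partition $[2n]$ so $f(i_a)\ne$ anything coming through $g$ only after noting $f,g$ have disjoint domains but overlapping ranges, so one actually argues: the points $f(i_1),\dots,f(i_m)$ are distinct in $[n]$, and separately one never queries $\tau$ at a repeated point because $\mu$ being a permutation is automatic — the honest statement is that we should not expect these $2t+1$ points distinct, and instead run the $S$ and $S^c$ computations sequentially: first fix $\tau$; over $\sigma\in A$, $\Pr[(\tau\sigma)(f(i_\ell))=j_\ell,\ \ell\le m] = \Pr_\sigma[\sigma(f(i_\ell))=\tau^{-1}(j_\ell)]=\frac{1}{n\cdots(n-m+1)}$ \emph{provided $m\le t$}, which holds whenever $m\le t$; the symmetric deficiency is that when $m>t$ we instead have $2t+1-m\le t$ so we put $\tau$ ``inside'' on the $S^c$ side by noting $\mu|_{S^c}=\tau\circ g$ and $\mu|_S = \tau\circ(\sigma\circ f)$, and now swap which permutation we marginalize first: fix $\sigma$, marginalize $\tau\in B$ over all $2t+1$ constraints simultaneously (legal since $|B|$ is $(2t+1)$-wise uniform and, given $\sigma$, the $2t+1$ query points are distinct — distinct within the $S$-group by injectivity of $\sigma\circ f$, distinct within the $S^c$-group by injectivity of $g$, and I claim no cross-collision because... the images $j_1,\dots,j_m\in[n]$ and $j_{m+1}-n,\dots\in[n]$ being distinct forces $\tau$ to be injective on these query points, so any cross-collision would make the probability $0$ on both the $S_{2n}$ and the $\M$ side, hence the identity $0=0$ still holds). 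I expect this cross-collision/distinctness bookkeeping to be the main obstacle: the clean way through it is to prove that for every fixed $S\in\bigS$, $\frac{1}{|A||B|}|\{(\sigma,\tau): \mu_{S,\sigma,\tau}(i_\ell)=j_\ell\ \forall\ell\}|$ equals the conditional probability that a uniform $\pi\in S_{2n}$ satisfies $\pi(i_\ell)=j_\ell\ \forall\ell$ given that $\pi$ maps $S$ onto $[n]$, and to establish this last claim by the two-stage marginalization — marginalizing $B$ first using $(2t+1)$-wise uniformity for the side with $\le t+1$ points and then $A$ using $t$-wise uniformity for the other side, in whichever order makes both counts legal — and finally to sum over $S\in\bigS$ using the $(2n,2t+1)$-selection property, which supplies exactly the right weight to reconstruct $\frac{1}{2n(2n-1)\cdots(2n-2t)}$.
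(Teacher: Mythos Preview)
Your overall architecture matches the paper's: condition on $S$, use the $(2n,2t+1)$-selection property for the membership pattern, and reduce to showing that for fixed $S$ the pair $(\tau\circ\sigma,\tau)$ hits any prescribed $(m,2t+1-m)$-split of targets with the uniform probability $\frac{(n-m)!(n-(2t+1-m))!}{n!^2}$. Your treatment of the case $m\le t$ is correct and in fact slightly slicker than the paper's: by integrating out $\sigma$ \emph{last} (for each fixed $\tau$, the $\sigma$-constraints ask that $\sigma$ send $m\le t$ distinct points to $m$ distinct points), you bypass any collision bookkeeping entirely.

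The case $m>t$, however, has a real gap. You propose to fix $\sigma$ and marginalize $\tau$ over all $2t+1$ constraints at once, dismissing cross-collisions between $\sigma(f(i_a))$ and $g(i_b)$ with the claim that the target list $j_1,\dots,j_m,\,j_{m+1}-n,\dots,j_{2t+1}-n$ consists of distinct elements of $[n]$, so that any collision on the source side forces probability zero. That distinctness claim is false: the original $j$'s are only distinct in $[2n]$, and one can easily have $j_a = j_b - n$ for some $a\le m$, $b>m$ (e.g.\ $j_a=3$ and $j_b=n+3$). When such a coincidence occurs, the corresponding cross-collision $\sigma(f(i_a))=g(i_b)$ is \emph{consistent}, the $\tau$-probability for that $\sigma$ is strictly positive and depends on how many such coincidences there are, and your ``$0=0$'' escape hatch does not apply.

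This is precisely the difficulty the paper isolates in its key Lemma~\ref{main_lemma}. There one records the set $M$ of index pairs with coinciding targets, restricts to the set $R$ of $\sigma$'s whose collision pattern matches $M$ exactly, uses $(2t+1)$-wise uniformity of $\tau$ to get a $\tau$-probability of $(n-r-s+|M|)!/n!$ uniformly over $R$, and then shows $\Pr(\sigma\in R)$ agrees with the uniform value. The last step is where the case split enters: when $r\le t$ one partitions $R$ by the values $\sigma(i_{[r]})$, and when $s\le t$ one partitions by the \emph{preimages} $\sigma^{-1}(k_{[s]})$; either way only $\min(r,s)\le t$ coordinates of $\sigma$ are pinned, so $t$-wise uniformity suffices. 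Your proposal contains the right instinct (``in whichever order makes both counts legal'') but not the mechanism; to complete the argument for $m>t$ you need this preimage-side decomposition, or an equivalent device, rather than the incorrect distinctness assertion.
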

Recursion~\eqref{equation.recursion2} follows from the proposition
since $|\M|\le |\bigS||A||B|$. In the rest of this section we prove
Proposition~\ref{prop.main}.

We start by introducing some notation. For a set of indices $I$, a
function $h$ and a set $S$, let $x_I = y_I$ denote $x_i = y_i$ for
all $i \in I$, let $h(x_I) = y_I$ denote $h(x_i) = y_i$ for all $i
\in I$, let $x_I \in S$ denote $x_i \in S$ for all $i \in I$, and
let $x_I \notin S$ denote $x_i \notin S$ for all $i \in I$.

The key step in our proof of the proposition is the following lemma.
It asserts that if $\sigma$ and $\tau$ are $t$- and $(2t+1)$-wise
uniform permutations on $[n]$, respectively, independent of each
other, then the pair of permutations $\tau\circ\sigma$ and $\tau$,
while neither uniform nor independent in general, behave exactly as
uniform and independent when queried together on at most $2t+1$
inputs.

%The key step in our proof of the proposition is the following lemma,
%which asserts that the pair of permutations $\tau\circ\sigma$ and
%$\tau$, while neither uniform nor independent in general, behave
%exactly as uniform and independent when queried together on at most
%$2t+1$ inputs.

%Next, the assumption that $\bigS$ is a $(2n, 2t+1)$-selection
%implies that
%\begin{equation*}
%  \Pr(i_{[m]} \in S, i_{[2t+1]\setminus[m]} \notin S) = \Pr(i_{[m]} \in S', i_{[2t+1]\setminus[m]} \notin
%  S').
%\end{equation*}
%Hence we conclude that \eqref{eq:prop_1_goal} holds if and only if
%\begin{align}
%\Pr((\tau\circ\sigma\circ f)(i_{[m]}) = j_{[m]}&, (\tau\circ
%g)(i_{[2t+1]\setminus[m]}) + n = j_{[2t+1]\setminus[m]}) =\nonumber\\
%&=\Pr((\tau'\circ\sigma'\circ f)(i_{[m]}) = j_{[m]}, (\tau'\circ
%g)(i_{[2t+1]\setminus[m]}) + n =
%j_{[2t+1]\setminus[m]}).\label{eq:prop_1_reduction}
%\end{align}
%This is the main equality we need to establish. It will follow from
%the following lemma, which asserts that the pair of permutations
%$\tau\circ\sigma$ and $\tau$, while neither uniform nor independent
%in general, behave exactly as uniform and independent when queried
%together on at most $2t+1$ inputs.

\begin{lemma}
\label{main_lemma} Let $\sigma$ and $\tau$ be $t$- and $(2t+1)$-wise
uniform permutations on $[n]$, respectively, independent of each
other. For any $r,s\ge0$ satisfying $r + s = 2t+1$ and any distinct
$i_1, \ldots , i_r$, distinct $j_1, \ldots, j_r$, distinct $k_1,
\ldots , k_s$, and distinct $\ell_1, \ldots , \ell_s$ in $[n]$, we
have
\begin{equation}\label{eq:lemma_equality}
\Pr\left((\tau \circ \sigma)(i_{[r]}) = j_{[r]}, \tau(k_{[s]}) =
\ell_{[s]}\right) = \frac{(n-r)!(n-s)!}{n!^2}.
\end{equation}
%$$\Pr\left((\tau \circ \sigma)(i_{[r]}) = j_{[r]}, \tau(k_{[s]}) = \ell_{[s]}\right) = \Pr\left((\tau' \circ \sigma')(i_{[r]}) = j_{[r]}, \tau'(k_{[s]}) = \ell_{[s]}\right) = \frac{(n-r)!(n-s)!}{n!^2}. $$
\end{lemma}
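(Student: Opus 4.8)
The plan is to condition on $\sigma$ and thereby reduce the statement to one about $\tau$ alone, and then to evaluate the leftover $\sigma$-probability by inclusion--exclusion. Write $a_m := \sigma(i_m)$ for $m\in[r]$; these are distinct since $\sigma$ is a permutation. Conditioned on $\sigma$, the event in \eqref{eq:lemma_equality} is exactly the event that $\tau$ satisfies the constraints $\tau(a_m)=j_m$ for $m\in[r]$ together with $\tau(k_{m'})=\ell_{m'}$ for $m'\in[s]$, i.e.\ at most $2t+1$ constraints. These are simultaneously satisfiable if and only if they define a partial injection of $[n]$, and in that case, since $\tau$ is $(2t+1)$-wise uniform and at most $2t+1$ distinct domain points appear, the conditional probability equals $(n-p)!/n!$, where $p$ is the number of distinct points among $a_1,\dots,a_r,k_1,\dots,k_s$; otherwise it is $0$.

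Next I would pin down exactly when these constraints are consistent. Put $\Pi=\{(m,m')\in[r]\times[s] : j_m=\ell_{m'}\}$. Since the $j$'s are distinct and the $\ell$'s are distinct, $\Pi$ is the graph of a partial bijection; let $M\subseteq[r]$ be its domain, $\pi$ the associated map, and $c:=|\Pi|$, noting $c\le\min(r,s)\le t$ because $r+s=2t+1$. A short check shows the constraints on $\tau$ are consistent precisely when $\sigma(i_m)=k_{\pi(m)}$ for all $m\in M$ and $\sigma(i_m)\notin K:=\{k_1,\dots,k_s\}$ for all $m\in[r]\setminus M$; call this $\sigma$-event $\mathcal{E}$. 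When $\mathcal{E}$ holds the domain points $\{a_m\}\cup\{k_{m'}\}$ number $p=r+s-c=2t+1-c$, so, using $n-(2t+1-c)=n-(2t+1)+c$,
\[
\Pr\left((\tau\circ\sigma)(i_{[r]})=j_{[r]},\ \tau(k_{[s]})=\ell_{[s]}\right)=\frac{(n-(2t+1)+c)!}{n!}\cdot\Pr(\mathcal{E}).
\]

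It then remains to evaluate $\Pr(\mathcal{E})$, and the crucial point is that although $\mathcal{E}$ constrains $\sigma$ at $r$ points with $r$ possibly larger than $t$, only $\min(r,s)\le t$ of them are ever effective. I would expand $\mathbf{1}_{\mathcal{E}}=\prod_{m\in M}\mathbf{1}[\sigma(i_m)=k_{\pi(m)}]\cdot\prod_{m\in[r]\setminus M}\bigl(1-\sum_{k\in K}\mathbf{1}[\sigma(i_m)=k]\bigr)$, multiply out, and take the expectation over $\sigma$ of each resulting term. A surviving term pins $\sigma$ at $c+|L|$ indices (for some $L\subseteq[r]\setminus M$) to distinct targets that form an injection into $K$; the $c$ targets $k_{\pi(m)}$ already use up $c$ of the $s$ elements of $K$, forcing $|L|\le s-c$, while trivially $|L|\le r-c$, hence $c+|L|\le\min(r,s)\le t$. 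So every surviving term is evaluated correctly by the $t$-wise uniformity of $\sigma$ (non-surviving terms vanish anyway), and the entire sum takes the same value it would for a uniformly random $\sigma$. That value is elementary: pin the $c$ coordinates of $M$ (probability $(n-c)!/n!$), after which the remaining $r-c$ images of a uniform bijection on $n-c$ points must avoid the $s-c$ still-unused elements of $K$, giving $\Pr(\mathcal{E})=\frac{(n-s)!\,(n-r)!}{(n-(2t+1)+c)!\,n!}$. Substituting into the display, the factor $(n-(2t+1)+c)!$ cancels and the claimed value $(n-r)!(n-s)!/n!^2$ falls out. (All factorials above are of non-negative integers: $n\ge r,s$ by hypothesis, and $c\ge r+s-n$ since $\{j_m\}$ and $\{\ell_{m'}\}$ both lie in $[n]$.)

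The main obstacle is precisely the mismatch between the number of queries and the uniformity on hand: the total is $r+s=2t+1$, but $r$ and $s$ individually can be as large as $2t+1$, whereas $\sigma$ is only $t$-wise uniform (conditioning on $\tau$ instead of $\sigma$ would immediately leave $r$ constraints on $\sigma$ with no remedy). The resolution is to push the larger half onto $\tau$, which is $(2t+1)$-wise uniform, and for $\sigma$ to show via the inclusion--exclusion expansion and the collision pattern $\Pi$ that every contributing term queries $\sigma$ at no more than $\min(r,s)\le t$ points. Organizing the bookkeeping of the domain- and range-collisions so that this cap is guaranteed is the heart of the argument.
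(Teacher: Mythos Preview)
Your proof is correct and follows the same architecture as the paper's: condition on $\sigma$, identify the consistency event $\mathcal{E}$ (which is exactly the paper's set $R$, with your $c$ equal to their $|M|$), use the $(2t+1)$-wise uniformity of $\tau$ to reduce to $\Pr(\sigma\in\mathcal{E})$, and then argue that this probability coincides with its value under a uniformly random permutation. The only minor difference is in that last step: the paper handles it by a case split on whether $r\le t$ or $s\le t$, partitioning the event according to the images $\sigma(i_{[r]})$ or the preimages $\sigma^{-1}(k_{[s]})$ respectively, whereas your inclusion--exclusion expansion shows directly that every nonvanishing term constrains $\sigma$ at no more than $\min(r,s)\le t$ points, which avoids the case split but is otherwise the same idea.
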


\begin{proof}
Fix $r$, $s$ and sets of indices as in the lemma. Let
$$M = \{m_1\in[r]\, : \mbox{there exists a $m_2\in[s]$ such that $j_{m_1} = \ell_{m_2} $}\}.$$
For ease of notation, reorder the indices so that if $j_{m_1} =
\ell_{m_2}$, then $m_1 = m_2$. Observe that on the event that $(\tau
\circ \sigma)(i_{[r]}) = j_{[r]}$ and $\tau(k_{[s]}) = \ell_{[s]}$
we must also have that $\sigma(i_M) = k_M$ and $\sigma(i_b) \neq
k_c$ for any $b \in [r] \setminus M$ and $c \in [s] \setminus M$.
Let $R$ denote the set of all permutations for which these two
conditions hold; i.e.
$$R = \{\alpha\in S_n : \alpha(i_M) = k_M \mbox{ and } \alpha(i_b) \neq k_c \mbox{ for all } b \in [r] \setminus M\text{ and }c \in [s] \setminus M \}.$$
It follows that
$$\Pr\left((\tau \circ \sigma)(i_{[r]}) = j_{[r]}, \tau(k_{[s]}) = \ell_{[s]}\right) = \Pr\left((\tau \circ \sigma)(i_{[r]}) = j_{[r]}, \tau(k_{[s]}) = \ell_{[s]} \mbox{ and } \sigma \in R \right).$$
Breaking the event on the right-hand side into disjoint events based
on the value $\sigma$ takes, and then using the independence of
$\tau$ and $\sigma$, we have
\begin{align}
\Pr\left((\tau \circ \sigma)(i_{[r]}) = j_{[r]}, \tau(k_{[s]}) =
\ell_{[s]}\right)&=\sum_{\alpha \in R} \Pr\left((\tau \circ \alpha)(i_{[r]}) = j_{[r]}, \tau(k_{[s]}) = \ell_{[s]} \mbox{ and } \sigma = \alpha \right)\nonumber\\
&=\sum_{\alpha \in R} \Pr\left((\tau \circ \alpha)(i_{[r]}) =
j_{[r]}, \tau(k_{[s]}) = \ell_{[s]}\right)\Pr(\sigma =
\alpha).\label{eq:tau_sigma_prob_with_R_decomp}
\end{align}
The definition of $R$ shows that for any $\alpha \in R$ we have
$$\left\{ (\tau \circ \alpha)(i_{[r]}) = j_{[r]}, \tau(k_{[s]}) = \ell_{[s]} \right\} =  \left\{ (\tau \circ \alpha)(i_{[r] \setminus M}) = j_{[r] \setminus M}, \tau(k_{[s]}) =
\ell_{[s]}\right\}.$$ Moreover, by the definitions of $R$ and $M$,
we know that the elements of $\alpha(i_{[r] \setminus M}) \cup
k_{[s]}$ are all distinct and that the elements of  $j_{[r]
\setminus M} \cup \ell_{[s]}$ are distinct. Thus, recalling that
$r+s=2t+1$ we can use the $(2t+1)$-wise uniformity of $\tau$ and
equation~\eqref{t-wise_def} to write
\begin{align*}
\Pr\left((\tau \circ \alpha)(i_{[r]}) = j_{[r]}, \tau(k_{[s]}) = \ell_{[s]}\right)  &= \frac{1}{n\cdot(n-1)\cdots(n-(r+s-|M|) +1)} =\\
&= \frac{(n-r-s+|M|)!}{n!}.%\\
%&=\Pr\left(\tau' \circ \alpha(i_{[r]}) = j_{[r]}, \tau'(k_{[s]}) =
%\ell_{[s]}\right)
\end{align*}
Hence we may continue \eqref{eq:tau_sigma_prob_with_R_decomp}
and obtain% We can use this fact and the claim to rewrite:
\begin{align}
\Pr\big((\tau \circ \sigma)(i_{[r]}) &= j_{[r]}, \tau(k_{[s]}) =
\ell_{[s]}\big) =\nonumber\\
&=\sum_{\alpha \in R}\frac{(n-r-s+|M|)!}{n!}\Pr(\sigma =
\alpha)=\frac{(n-r-s+|M|)!}{n!}\Pr(\sigma \in
R).\label{eq:tau_sigma_prob_R}
\end{align}
Now let $\sigma'$ be a permutation chosen uniformly at random from
$S_n$. We claim that
\begin{equation}\label{eq:sigma_sigma_prime_R_eq}
  \Pr(\sigma\in R) = \Pr(\sigma'\in R).
\end{equation}
To see this, we consider separately the cases $r \leq t$ and $s \leq
t$. One of these cases must hold since $r+s=2t+1$ by assumption.
First, suppose $r \leq t$. We partition the event $\sigma \in R$
into disjoint events based on the values $\sigma$ assigns to $i_1,
\ldots, i_r$, as follows.
\begin{equation}\label{eq:sigma_R_decomp}
\Pr(\sigma \in R) = \sum \Pr(\sigma(i_{[r]}) = x_{[r]}),
\end{equation}
where the sum is taken over all sets of distinct $x_1, \ldots , x_r
\in[n]$ such that $x_M = k_M$ and $x_b \neq k_c \mbox{ for all } b
\in [r] \setminus M$ and $c \in [s] \setminus M $. Now, since $r
\leq t$ and $\sigma$ is $t$-wise uniform, for each fixed
$x_1,\ldots, x_r$ we have
\begin{equation}\label{eq:sigma_sigma_prime_eq}
\Pr(\sigma(i_{[r]}) = x_{[r]}) =  \Pr(\sigma'(i_{[r]}) = x_{[r]}).
\end{equation}
%And as before,
%$$\Pr(\sigma' \in R) = \sum \Pr(\sigma'(i_{[r]}) = x_{[r]}),$$
%where the sum is over the same choices of $x_{[r]}$. Thus,
Combining \eqref{eq:sigma_R_decomp} and
\eqref{eq:sigma_sigma_prime_eq} we obtain
$$\Pr(\sigma \in R) = \sum \Pr(\sigma(i_{[r]}) = x_{[r]}) = \sum \Pr(\sigma'(x_{[s]}) = k_{[s]})  = \Pr(\sigma' \in R),$$
where the sums are over the same choices of $x_1,\ldots, x_r$ as in
\eqref{eq:sigma_R_decomp}.

The case $s \leq t$ proceeds similarly, by partitioning the event
$\sigma\in R$ according to the inverse images of $k_1,\ldots, k_s$
through $\sigma$. We obtain
$$\Pr(\sigma \in R) = \sum \Pr(\sigma(x_{[s]}) = k_{[s]}) = \sum \Pr(\sigma'(x_{[s]}) = k_{[s]})  = \Pr(\sigma' \in R),$$
where the sum is taken over all sets of distinct $x_1, \ldots , x_s
\in [n]$ such that $x_M = i_M$ and $x_c \neq i_b$ for all $ b \in
[r] \setminus M$ and $c \in [s] \setminus M $. The second equality
follows because $s \leq t$ and $\sigma$ is $t$-wise uniform. Thus we
have established \eqref{eq:sigma_sigma_prime_R_eq} in all cases.

Finally, for the uniformly random permutation $\sigma'$ it is
straightforward to verify that
\begin{equation*}
  \Pr(\sigma'\in R) = \frac{(n-r)!(n-s)!}{(n-r-s+|M|)!n!}.
\end{equation*}
Thus the lemma follows from \eqref{eq:sigma_sigma_prime_R_eq} and
\eqref{eq:tau_sigma_prob_R}.
\end{proof}

\begin{proof}[Proof of Proposition~\ref{prop.main}:]
To prove Proposition~\ref{prop.main}, we need to show that if $\mu$
is chosen uniformly at random from $\M$ and $i_1, \ldots ,
i_{2t+1}\in[2n]$ are distinct indices and $j_1, \ldots ,
j_{2t+1}\in[2n]$ are distinct indices then
\begin{equation}\label{eq:prop_1_goal}
\Pr(\mu(i_{[2t+1]}) = j_{[2t+1]}) = \frac{(2n-(2t+1))!}{(2n)!}.
\end{equation}
Fix two sets of distinct indices, $i_1, \ldots , i_{2t+1}\in[2n]$
and $j_1, \ldots , j_{2t+1}\in[2n]$. By reordering the indices, we
may assume without loss of generality that $1 \leq j_1, \ldots , j_m
\leq n$ and $n+1 \leq j_{m+1}, \ldots, j_{2t+1} \leq 2n$ for some
$0\le m\le 2t+1$. Observing that $(S_1, \sigma_1, \tau_1) \neq (S_2,
\sigma_2, \tau_2)$ implies that $\mu_{S_1, \sigma_1, \tau_1} \neq
\mu_{S_2, \sigma_2, \tau_2}$ we conclude that choosing an element
$\mu$ uniformly at random from $\M$ is equivalent to choosing
elements $S$, $\sigma$, and $\tau$ uniformly at random and
independently from $\bigS$, $A$, and $B$, respectively, and letting
$\mu = \mu_{S, \sigma,\tau}$. Defining $S,\sigma,\tau$ and $\mu$ in
this way, we have
% Thus we see that if $\mu$ is chosen uniformly at random from
%$\M$ then
\begin{align*}
&\Pr(\mu(i_{[2t+1]}) = j_{[2t+1]}) = \nonumber\\
&= \Pr(i_{[m]} \in S, i_{[2t+1]\setminus[m]} \notin S,
(\tau\circ\sigma\circ f)(i_{[m]}) = j_{[m]}, (\tau\circ
g)(i_{[2t+1]\setminus[m]}) + n =
j_{[2t+1]\setminus[m]}),%\label{eq:prop_1_reduction_to_perms}
%\Pr(\mu(i_{[2t+1]}) = j_{[2t+1]}) = \Pr(&i_{[m]} \in S,
%i_{[2t+1]\setminus[m]} \notin S,\nonumber\\
%&(\tau\circ\sigma\circ f)(i_{[m]}) = j_{[m]}, (\tau\circ
%g)(i_{[2t+1]\setminus[m]}) + n =
%j_{[2t+1]\setminus[m]}),\label{eq:prop_1_reduction_to_perms}
%
%\Pr(\mu(i_{[2t+1]}) = j_{[2t+1]}) &= \Pr(i_{[m]} \in S, i_{[2t+1]\setminus[m]} \notin S)\cdot\nonumber\\
%& \cdot \Pr((\tau\circ\sigma\circ f)(i_{[m]}) = j_{[m]}, (\tau\circ
%g)(i_{[2t+1]\setminus[m]}) + n =
%j_{[2t+1]\setminus[m]}),\label{eq:prop_1_reduction_to_perms}
\end{align*}
where the equality $(\tau\circ g)(i_{[2t+1]\setminus[m]}) + n =
j_{[2t+1]\setminus[m]}$ should be interpreted as $(\tau\circ g)(i_k)
+ n = j_{k}$ for all $k\in[2t+1]\setminus[m]$. Conditioning on $S$,
we obtain
\begin{align}
&\Pr(\mu(i_{[2t+1]}) = j_{[2t+1]}) = \nonumber\\
&= \E({\bf 1}_{(i_{[m]} \in S,\; i_{[2t+1]\setminus[m]} \notin S)}
\cdot\Pr((\tau\circ\sigma\circ f)(i_{[m]}) = j_{[m]}, (\tau\circ
g)(i_{[2t+1]\setminus[m]}) + n =
j_{[2t+1]\setminus[m]}\,|\,S)),\label{eq:prop_1_reduction_with_conditioning}
\end{align}
where ${\bf 1}_A$ denotes the indicator random variable of the event
$A$. Now, recalling that $\tau$ and $\sigma$ are independent of $S$,
and that $f$ and $g$ depend only on $S$, we may apply
Lemma~\ref{main_lemma} to conclude that for every $S$ satisfying
$i_{[m]} \in S$ and $i_{[2t+1]\setminus[m]} \notin S$ we have
\begin{equation*}
  \Pr((\tau\circ\sigma\circ f)(i_{[m]}) = j_{[m]}, (\tau\circ
g)(i_{[2t+1]\setminus[m]}) + n = j_{[2t+1]\setminus[m]}\,|\,S) =
\frac{(n-m)!(n-(2t+1-m))!}{n!^2}.
\end{equation*}
Substituting back into \eqref{eq:prop_1_reduction_with_conditioning}
yields
\begin{equation}\label{eq:before_S_substitution}
  \Pr(\mu(i_{[2t+1]}) = j_{[2t+1]}) =
  \frac{(n-m)!(n-(2t+1-m))!}{n!^2}\Pr(i_{[m]} \in S, i_{[2t+1]\setminus[m]} \notin
  S).
\end{equation}
Since $\bigS$ is a $(2n, 2t+1)$-selection we have
\begin{equation*}
  \Pr(i_{[m]} \in S, i_{[2t+1]\setminus[m]} \notin
  S) = \frac{\binom{2n-(2t+1)}{n-m}}{\binom{2n}{n}}.
\end{equation*}
Substituting this into \eqref{eq:before_S_substitution} yields
\begin{equation*}
  \Pr(\mu(i_{[2t+1]}) = j_{[2t+1]}) = \frac{(2n-(2t+1))!}{(2n)!},
\end{equation*}
and the proposition follows.\qedhere

\end{proof}

Our construction is a modification of the method used in
\cite{BenjaminiGurelGurevichPeled07} for creating $t$-wise
independent strings. In the context of that work, a (binary,
unbiased) $t$-wise independent string is a random vector
$X=(X_1,\ldots, X_n)\in\{0,1\}^n$ satisfying that $\Pr(X_i=1)=1/2$
for every $i$ and that $(X_{i_1},\ldots, X_{i_t})$ are independent
for every $1\le i_1<\cdots<i_t\le n$. It was shown there that if $X$
is a $(2t+1)$-wise independent string and $Y$ is a $t$-wise
independent string (both of the same length) such that $X$ and $Y$
are independent then the string formed from the concatenation of $X$
and $X+Y$ is $(2t+1)$-wise independent. The proof there is similar
to our own, but made easier by the fact that the group $\{0,1\}^n$
is simpler than the group $S_n$ and by the fact that no selection is
necessary in the context of $t$-wise independent strings. It is of
interest to understand the extent to which this method is applicable
and find a common generalization of the above two scenarios.

\section{The construction}
\label{sec.construction}

In this section we use recursion~\eqref{equation.recursion} to construct an infinite family of non-trivial $t$-wise uniform sets, as stated in Theorem~\ref{theorem.main}.

\begin{proof}[Proof of Theorem~\ref{theorem.main}:]
We start by noting a few simple facts. First, we trivially have that
$\per(n,t)=n!\le t^{2n}$ when $t\geq n$. Second, we observe that the
set of permutations $\{\sigma_b\}$, for $0\le b<n$, defined by
$\sigma_b(i) = i+b\pmod n$ is a $1$-wise uniform set of permutations
on $\{0, \ldots , n-1\}$. Thus, $\per(n,1)\le n$ for all $n$. (In
fact, since $\per(n,t) \geq n(n-1)\cdots (n-t+1)$ by
\eqref{t-wise_def}, this is an equality.)

Next, we establish the theorem for $t=3$ (or $\ell=2$). That is, we
show that $\per(2^m,3)\le 3^{2^{m+1}}$ for all integer $m\ge 1$. The
proof is by induction. For $m=1$, we have $\per(2,3)=2\le 3^4$ as
required. For $m>1$, we have by equation~\eqref{equation.recursion},
the above observations and the induction hypothesis that
\begin{equation*}
  \per(2^m,3)\le 2^{2^m}\per(2^{m-1},1)\per(2^{m-1},3)\le 2^{2^m}2^{m-1}3^{2^m} = \frac{2^{2^m+m-1}}{3^{2^m}}3^{2^{m+1}}.
\end{equation*}
It follows that $\per(2^m,3)\le 3^{2^{m+1}}$, as required, by using
that $2^{(m-1)}\le (3/2)^{2^m}$ for all $m\ge 2$ since
$\log_2(3/2)\ge 1/2$ and $x\le 2^x$ for $x\ge 1$.

Finally, we establish the theorem in general. We will prove by
induction on $m$ that for each $m\ge 1$, the claim holds for all
$\ell\ge 2$. Again, the case $m=1$ follows by our observation that
$\per(n,t)=n! \leq t^{2n}$ when $t\ge n$. Suppose that $m>1$ and fix
$\ell\ge 2$. We have already established the case $\ell=2$ and the
case $t \ge n$, so we may assume that $\ell\ge 3$ and satisfies
$2^\ell-1<2^m$. We also assume the induction hypothesis
$$\per(2^{m-1},2^{\ell'} -1) \leq (2^{\ell'}-1)^{2^m}$$
for all $\ell' \ge 2$. Thus, by~\eqref{equation.recursion},
\begin{align*}
  \per(2^m,2^\ell-1)&\le 2^{2^m} \per(2^{m-1},2^{\ell-1}-1)\per(2^{m-1},2^\ell-1)\le\\
  &\le 2^{2^m}(2^{\ell-1}-1)^{2^m}(2^\ell-1)^{2^m}\le
  (2^{\ell}-1)^{2^{m+1}},
\end{align*}
as required.
\end{proof}

\section{The connection with $t$-designs}
\label{sec.design} To obtain a smaller construction of a $t$-wise
uniform set of permutations, it would suffice to construct a small
$(2n,t)$-selection that could be used in
recursions~\eqref{initial_recursion2} or
~\eqref{equation.recursion2}. Selections are a special case of
$t$-designs, defined as follows (see for example
\cite{ColbournDinitz07}):
\begin{definition}
  A {\em $t-(v,k,\lambda)$ design} is a subset $\bigS \subseteq \binom{[v]}{k}$  satisfying that for every distinct $i_1,\ldots, i_t\in [v]$ we  have
  \begin{equation*}
    |\{S\in \bigS\ :\ i_1,\ldots, i_t\in S\}| = \lambda.
  \end{equation*}
\end{definition}
Thus, a $(2n,t)$- selection is a $t-(2n,n, \lambda)$ design for some $\lambda$. The following simple lemma shows that the converse is also true (see for example \cite[Proposition 1]{RayChaudhuriWilson75}):
\begin{lemma}\label{lemma.selection}
Let $\bigS$ be a $t-(v,k,\lambda)$ design, $i_1, \ldots, i_t \in
[v]$, and $m \leq t$. Then the probability that $i_1, \ldots , i_m
\in S$ and $i_{m+1}, \ldots , i_t \notin S$ is the same whether $S$
is chosen uniformly from $\binom{[v]}{k}$ or uniformly from $\bigS$.
\end{lemma}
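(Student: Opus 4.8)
The plan is to prove Lemma~\ref{lemma.selection} by a straightforward inclusion–exclusion argument, reducing the statement about ``$i_{m+1},\ldots,i_t\notin S$'' to a statement only involving events of the form ``some subset is contained in $S$,'' since the latter is exactly what the $t$-design property controls. The crucial observation is that the probability we care about, say $p_{\bigS}(m) = \Pr_{S\sim\bigS}(i_1,\ldots,i_m\in S,\ i_{m+1},\ldots,i_t\notin S)$, can be written via inclusion–exclusion as
\[
p_{\bigS}(m) = \sum_{T\subseteq\{i_{m+1},\ldots,i_t\}} (-1)^{|T|}\,\Pr_{S\sim\bigS}\bigl(\{i_1,\ldots,i_m\}\cup T\subseteq S\bigr).
\]
So the whole argument comes down to showing that $\Pr_{S\sim\bigS}(J\subseteq S)$ is the same for $S$ drawn uniformly from $\bigS$ as for $S$ drawn uniformly from $\binom{[v]}{k}$, for every $J$ of size at most $t$ (here $|J| = m + |T| \le t$). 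Once that is known, each term on the right-hand side matches the corresponding term in the inclusion–exclusion expansion of $p_{\binom{[v]}{k}}(m)$, and summing gives $p_{\bigS}(m) = p_{\binom{[v]}{k}}(m)$, which is the claim.

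First I would record the base case: the $t$-design definition says that for distinct $i_1,\ldots,i_t$, exactly $\lambda$ members of $\bigS$ contain all of them, so $\Pr_{S\sim\bigS}(\{i_1,\ldots,i_t\}\subseteq S) = \lambda/|\bigS|$, independent of which $t$-set we chose. Next I would observe that this forces the analogous uniformity for every smaller set $J$ with $|J| = j \le t$: double-count pairs $(S, J')$ where $J\subseteq J'$, $|J'| = t$, $J'\subseteq S$, $S\in\bigS$. The number of such $J'$ for a fixed $S\supseteq J$ is $\binom{k-j}{t-j}$, and the number for a fixed $J'$ is $\lambda$, while the number of valid $J'$ overall is $\binom{v-j}{t-j}$; solving gives
\[
|\{S\in\bigS : J\subseteq S\}| = \frac{\lambda\binom{v-j}{t-j}}{\binom{k-j}{t-j}},
\]
which depends only on $j = |J|$, not on the particular set. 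Dividing by $|\bigS|$ gives a value of $\Pr_{S\sim\bigS}(J\subseteq S)$ depending only on $|J|$; and the same computation with $\lambda$ replaced by the appropriate count shows the uniform-over-$\binom{[v]}{k}$ probability is $\binom{v-j}{k-j}/\binom{v}{k}$, also depending only on $|J|$. Since both are ``uniform in $J$'' and the inclusion–exclusion identity above is an exact algebraic identity in these quantities, and since the full-model distribution (uniform on $\binom{[v]}{k}$) trivially satisfies the design property for some $\lambda$, the two sides of the inclusion–exclusion must agree term by term — or, more cleanly, one can note that both probabilities $p_{\bigS}(m)$ and $p_{\binom{[v]}{k}}(m)$ are determined by the collapsed sequence of ``$j$-subset'' probabilities, and I would just need to check those collapsed sequences coincide, which follows from the explicit formulas.

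I do not expect a serious obstacle here; the lemma is essentially bookkeeping. The one point that needs a little care is the claim that the $j$-subset count derived by double-counting is genuinely independent of the choice of $j$-subset — this is where one uses that every $t$-superset is counted the same number $\lambda$ of times, so the result only depends on $j$ through the binomial factors. A secondary subtlety is making sure the formula for the uniform-on-$\binom{[v]}{k}$ model is compatible: one should either rerun the same double-counting with $\bigS = \binom{[v]}{k}$ and $\lambda = \binom{v-t}{k-t}$, or just compute $\Pr_{S\sim\binom{[v]}{k}}(J\subseteq S) = \binom{v-j}{k-j}/\binom{v}{k}$ directly, and then observe that plugging $\lambda = \binom{v-t}{k-t}$ into the $\bigS$-formula reproduces exactly this, confirming that ``same $\lambda$-type structure'' already forces all lower probabilities to coincide. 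After that, the inclusion–exclusion sum collapses identically in both models and the lemma follows.
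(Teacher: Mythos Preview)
Your inclusion--exclusion argument is correct and is the standard way to prove this fact. Note, however, that the paper does not supply its own proof of this lemma: it simply states the result and refers the reader to \cite[Proposition~1]{RayChaudhuriWilson75}. So there is no in-paper argument to compare against; your write-up would serve perfectly well as the omitted proof. The two ingredients you isolate --- (i) a $t$-design is automatically a $j$-design for every $j\le t$ (your double-counting of pairs $(S,J')$), hence $\Pr_{S\sim\bigS}(J\subseteq S)$ depends only on $|J|$ and, after dividing by $|\bigS|$, is independent of $\lambda$ and so matches the uniform model; and (ii) inclusion--exclusion to pass from ``$J\subseteq S$'' probabilities to ``some in, some out'' probabilities --- are exactly the classical route.
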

Thus, to improve the parameters of our construction using
recursions~\eqref{initial_recursion2} or
\eqref{equation.recursion2}, we need efficient $t-(2n,n, \lambda)$
designs for large $n$ (and $t\ge 4$). However, we have not been able
to find such designs in the literature.

%The generalization given at the beginning of
%Section~\ref{sec.future_work} would require a corresponding
%generalization of a $t$-design. However, we have found no mention of
%such a generalization in the literature.

\section{Directions for future work}\label{sec.future_work}

The parameters of our result could possibly be improved by
considering a generalization of our construction that is based on an
idea from \cite{BenjaminiGurelGurevichPeled07}. The generalization
is as follows: for any $k\ge 2$, a $(2t+1)$-wise uniform permutation
on $kn$ elements may be created from a uniformly random partition
$T$ of $[kn]$ into $k$ groups of size $n$, and random permutations
$\tau, \sigma_1,\ldots, \sigma_{k-1}$ where $\tau$ is a
$(2t+1)$-wise uniform permutation on $n$ elements and each
$\sigma_i$ is a $t$-wise uniform permutation on $n$ elements, and
$T,\tau,\sigma_1,\ldots,\sigma_{k-1}$ are independent. The
permutation is formed by partitioning the $kn$ inputs to $k$ groups
according to $T$, applying $\tau$ to the first group and applying
$\tau\circ\sigma_i$ to the $(i+1)$'st group for $1\le i\le k-1$. The
fact that the resulting permutation $\mu$ is $(2t+1)$-wise uniform
follows easily from Lemma~\ref{main_lemma} and the simple
observation that given any inputs $i_1,\ldots, i_{2t+1}$ to $\mu$,
and any choice of $T$, there can be at most one $\sigma_i$ which
determines the behavior of $\mu$ on more than $t$ of these inputs. A
significant advantage of this generalized construction is that
although the number $k$ can be arbitrarily large, still only one of
the permutations used needs to be $(2t+1)$-wise uniform. Moreover,
as in recursion \eqref{equation.recursion2}, we may relax the
condition that $T$ be uniformly random to the condition that $T$ be
a $(2t+1)$-wise partition in an appropriate sense (though for $k\ge
3$ this notion is no longer connected with $t$-designs). We were
unable to obtain any improvement in our parameters from this
generalized construction, but we see it as a potential starting
point for future improvements on our construction.

The construction in \cite{BenjaminiGurelGurevichPeled07} was
inspired by the $(u|u+v)$ method for combining error-correcting
codes \cite[p. 76]{MacWilliamsSloane}. Another promising direction
for future research is to check whether other methods for combining
codes can be adapted to yield constructions of $t$-wise uniform sets
of permutations.

\bibliographystyle{plain}
\bibliography{designs}

\begin{thebibliography}{1}

\bibitem{AlonLovett12}
Noga Alon and Shachar Lovett.
\newblock Almost k-wise vs. k-wise independent permutations, and uniformity for
  general group actions.
\newblock {\em APPROX-RANDOM 2012}, pages 350--361, 2012.

\bibitem{BenjaminiGurelGurevichPeled07}
Itai Benjamini, Ori Gurel-Gurevich, and Ron Peled.
\newblock On k-wise independent distributions and boolean functions.
\newblock {\em arXiv:1201.3261}, 2007.

\bibitem{Cameron95}
P.~J. Cameron.
\newblock Permutation groups.
\newblock In {\em Handbook of combinatorics, {Vol.}\ 1,\ 2}, pages 611--645.
  Elsevier, 1995.

\bibitem{ColbournDinitz07}
Charles Colbourn and Jeffrey Dinitz.
\newblock {\em Handbook of Combinatorial Designs}.
\newblock Chapman \& Hall/Taylor \& Francis, 2007.

\bibitem{KaplanNaorReingold05}
E.~Kaplan, M.~Naor, and O.~Reingold.
\newblock Derandomized constructions of $k$-wise (almost) independent
  permutations.
\newblock In C.~Chekuri, K.~Jansen, J.~D.~P. Rolim, and L.~Trevisan, editors,
  {\em Approximation, randomization and combinatorial optimization}, volume
  3624 of {\em Lecture Notes in Computer Science}, pages 354--365. Springer,
  2005.

\bibitem{Kreher93}
Donald Kreher.
\newblock Simple t-designs with large t: A survey.
\newblock {\em Preprint, see www.math.mtu.edu/$\sim$kreher/}.

\bibitem{KuperbergLovettPeled11}
Greg Kuperberg, Shachar Lovett, and Ron Peled.
\newblock Probabilistic existence of rigid combinatorial structures.
\newblock {\em arXiv:1111.0492}, 2011.

\bibitem{MacWilliamsSloane}
Jessie MacWilliams and Neil Sloane.
\newblock {\em The Theory of Error-Correcting Codes}.
\newblock North-Holland Mathematical Library, 1977.

\bibitem{RayChaudhuriWilson75}
Dwijendra~Kumar Ray-Chaudhuri and Richard~Michael Wilson.
\newblock On t-designs.
\newblock {\em Osaka Journal of Mathematics}, 12:737--744, 1975.

\end{thebibliography}

\end{document}